\theoremstyle{thmstyleone}%
\newtheorem{theorem}{Theorem}
\newtheorem*{theorem*}{Theorem}
\theoremstyle{thmstyletwo}%
\newtheorem{remark}{Remark}%
\theoremstyle{thmstylethree}%
\begin{document}
\newcommand{\R}{\mathds{R}}
\newcommand{\So}{\mathcal{S}}
\newcommand{\Ha}{\mathcal{H}}
\newcommand{\M}{M^{+}(\R^n)}
\newcommand{\wpp}{\mathbf{W}_{0}^{1,p}(\Omega)}
\newcommand{\wa}{\mathbf{W}_{\alpha,p}}
\newcommand{\wao}{\mathbf{W}_{\alpha,p}}
\newcommand{\wat}{\mathbf{W}_{\alpha,p}}
\newcommand{\wak}{\mathbf{W}_{k}}
\newcommand{\wah}{\mathbf{W}_{\frac{2k}{k+1},k+1}}
\newcommand{\ia}{\mathbf{I}_{2\alpha}}
\newcommand{\Om}{\Omega}
\newcommand{\iom}{\int_{\Omega}}
\newcommand{\e}{\epsilon}
\newcommand{\g}{\gamma}
\newcommand{\al}{\alpha}
\newcommand{\un}{u_{n}}
\newcommand{\vf}{\varphi}
\newcommand{\wc}{\rightharpoonup}
\newcommand{\phik}{\Phi^{k}(\Om)}
\newcommand{\la}{\lambda}
\newcommand{\Linf}{\Delta_{\infty}}
\newcommand{\lap}{\Delta_{p}}
\newcommand{\dx}{\mathrm{dx}}
\newcommand{\Div}{\mathrm{div}}
\def\norma#1#2{\|#1\|_{\lower 4pt \hbox{$\scriptstyle #2$}}}
\def\L#1{L^{#1}(\Omega)}

\title{An inhomogeneous p-laplacian equation with a Hardy potential}
\author{\fnm{Genival} \sur{da Silva}\footnote{email: gdasilva@tamusa.edu, website: \url{www.gdasilvajr.com}}}
\affil{\orgdiv{Department of Mathematics}, \orgname{Texas A\&M University - San Antonio}}
\abstract{In this work we study the existence and regularity of solutions to the following equation: $$\lap u + g(x) u = \frac{\la}{|x|^{p}} |u|^{p-2}u + f,$$ where $1< p < N$ and $f\in\L m$, where $m\ge 1$.}

\keywords{elliptic equation, p-laplacian, regularity, existence}


\pacs[MSC Classification]{35J92, 35J15,35B65,35A01}

\maketitle 
\section{Introduction}
In a recent paper \cite{boc23}, building up on the previous work \cite{boc06}, the authors study the existence and summability of the problem:
\begin{equation} 
\begin{cases}
 -\Div(M(x)u)+ g(x)u= \frac{\la}{|x|^{p}}u + f \qquad & \mbox{in } \Omega,\\
u (x) = 0 & \mbox{on }  \partial \Omega,
\end{cases}
\end{equation}
where $M(x)$ is positive matrix such that $M(x)\xi\cdot\xi>\al |\xi|^{2}$, $g(x)\ge 0$, $0<\la< \al(\frac{N-2}{2})^{2}$ and $f\in\L 1$.

In these notes we will study the equivalent problem for the p-Laplacian, namely:
\begin{equation} \label{main}
\begin{cases}
 -\lap u + g(x)u= \frac{\la}{|x|^{p}} |u|^{p-2}u + f \qquad & \mbox{in } \Omega,\\
u (x) = 0 & \mbox{on }  \partial \Omega,
\end{cases}
\end{equation}
for $1<p<N$. The crucial component of our analysis is the Hardy inequality:
\vspace{0.1in}
\begin{theorem*}(Hardy's inequality)
If $v\in\wpp$ and $1<p<N$ then
\begin{equation} \label{hardy}
\Ha^{p} \iom \frac{|v|^{p}}{|x|^{p}} \le \iom |Dv|^{p},
\end{equation}
where the constant $\Ha=\left( \frac{N-p}{p}\right)$ is optimal.
\end{theorem*}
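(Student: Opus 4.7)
The plan is the classical divergence-based argument, combined with a density and cutoff step. By density of $C_c^\infty(\Omega)$ in $\wpp$, together with a standard Fatou / lower-semicontinuity passage, it is enough to prove \eqref{hardy} for $v\in C_c^\infty(\Omega)$. The engine of the proof is the pointwise identity
\begin{equation*}
\Div\!\left(\frac{x}{|x|^p}\right) \;=\; \frac{N-p}{|x|^p},
\end{equation*}
which follows from a straightforward component-wise computation and already explains the appearance of $\Ha = (N-p)/p$.

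Assuming first that $v$ vanishes on a neighborhood of the origin, I multiply by $|v|^p$ and integrate by parts:
\begin{equation*}
(N-p)\iom \frac{|v|^p}{|x|^p}\,\dx \;=\; \iom |v|^p\, \Div\!\left(\frac{x}{|x|^p}\right)\dx \;=\; -p\iom |v|^{p-2}v\,\nabla v\cdot\frac{x}{|x|^p}\,\dx,
\end{equation*}
with no boundary contribution since $v$ is compactly supported away from both $\partial\Omega$ and the origin. Bounding $|x\cdot\nabla v|\le |x||\nabla v|$ and applying H\"older with conjugate exponents $p/(p-1)$ and $p$ yields
\begin{equation*}
(N-p)\iom \frac{|v|^p}{|x|^p}\,\dx \;\le\; p\left(\iom \frac{|v|^p}{|x|^p}\,\dx\right)^{\!(p-1)/p}\left(\iom |\nabla v|^p\,\dx\right)^{\!1/p}.
\end{equation*}
Cancelling the common factor (the trivial case being immediate) and raising to the $p$-th power delivers \eqref{hardy}.

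The main technical obstacle is the singularity of $x/|x|^p$ at the origin, which invalidates the integration by parts when $0\in\mathrm{supp}(v)$. I would handle this with a radial cutoff $\chi_\varepsilon\in C^\infty(\R^n)$ satisfying $\chi_\varepsilon\equiv 0$ on $B_\varepsilon(0)$, $\chi_\varepsilon\equiv 1$ outside $B_{2\varepsilon}(0)$, and $|\nabla\chi_\varepsilon|\le C/\varepsilon$. Applying the previous step to $\chi_\varepsilon v$ and letting $\varepsilon\to 0$, the left-hand side converges by monotone convergence while the gradient error is controlled by
\begin{equation*}
\iom |\nabla\chi_\varepsilon|^p |v|^p\,\dx \;\le\; C\varepsilon^{-p}\|v\|_\infty^p\,|B_{2\varepsilon}\setminus B_\varepsilon| \;\le\; C'\varepsilon^{N-p}\to 0,
\end{equation*}
which is exactly where the hypothesis $N>p$ is essential.

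Finally, for the optimality of $\Ha$, the plan is to test the quasi-extremal family $v_\varepsilon(x)=|x|^{-(N-p)/p+\varepsilon}\eta(x)$, where $\eta$ is a smooth cutoff around the origin that ensures $v_\varepsilon\in\wpp$. A direct polar-coordinates computation shows that both $\iom |v_\varepsilon|^p/|x|^p\,\dx$ and $\iom |\nabla v_\varepsilon|^p\,\dx$ diverge at rate $1/\varepsilon$ as $\varepsilon\to 0^+$, with ratio tending exactly to $((N-p)/p)^p$, so no strictly larger constant can replace $\Ha^p$ in \eqref{hardy}. The delicate step in the whole argument is the cutoff removal near the singularity; everything else is an exercise in H\"older and integration by parts.
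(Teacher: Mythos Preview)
The paper does not actually prove Hardy's inequality: it is quoted in the Introduction as a classical tool, without proof or reference, and is then used repeatedly in the later arguments. So there is no ``paper's own proof'' to compare against.

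That said, your proposal is correct and is precisely the standard proof one finds in the literature. The divergence identity $\Div(x/|x|^{p})=(N-p)/|x|^{p}$ followed by integration by parts and H\"older is the canonical route, and your treatment of the singularity at the origin via the radial cutoff $\chi_\varepsilon$ with the $\varepsilon^{N-p}$ error estimate is the right way to justify the formal computation (and, as you note, is where $p<N$ enters). The optimality sketch via the quasi-extremals $|x|^{-(N-p)/p+\varepsilon}$ is also the standard one; it would be worth spelling out that the cutoff $\eta$ contributes only a bounded amount to each integral while the singular part near the origin carries the $1/\varepsilon$ divergence, but this is routine. Nothing is missing.
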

The idea of all the proofs is essentially the same: truncate and obtain boundedness estimates. It's remarkable that we can apply the same methods of the linear case to this quasilinear case, despite the nonlinearities. 

The upshot is that Hardy's inequality provides a way to control the nonlinearity in terms of the Sobolev norm, which is essentially what we are trying to estimate.

The main challenges of problem \eqref{main} are the low summability of the source $f$ which puts the right hand side outside the dual of $\wpp$, and the nonlinearity with superlinear growth in case $p>2$ (for more on this type of problem see \cite{silva24}), which poses an obstacle to existence and high regularity.

The idea of truncating an equation in order to obtain estimates is not new, it was consolidated by Stampacchia \cite{sta65}, who proved an equivalent of Calderon-Zygmund $L^{p}$-estimates for equations with discontinuous coefficients. However, the method is not invincible, in some cases the existence of solutions is not guaranteed, even for bounded sources; nevertheless, in these notes this method will be successful due to the pivotal hole of Hardy's inequality in obtaining the estimates.

The paper is organized as follows: In section \ref{cs1}, we analyze the case with no lower order term, where we analyze two scenarios: high summability of the source, meaning $f$ is in the dual of $\wpp$; and low summability of the source, where we take the source in $\L m$ with $m>1$. In section \ref{cs3} we study the same problem with a added lower order term, we are able to obtain under certain hypothesis existence and regularity of the solutions.
\subsection*{Notation}\label{notation}
\begin{itemize}
\item[-] $\Om\subset \R^{N}$ is a bounded domain.
\item[-] The space $\wpp$ denotes the usual Sobolev space which is the closure of $\mathcal{C}^{\infty}_{0}(\Om)$, smooth functions with compact support, in the $p$-norm. 
\item[-] For $1<p<\infty$, the \textit{p-Laplacian} $\Delta_{p}$ is given by $-\Div (|Du|^{p-2}Du)$. 
\item[-] For $q>0$, $q'$ denotes the Holder conjugate, i.e. $\frac{1}{q}+\frac{1}{q'}=1$, and $q^{*}$ denotes the Sobolev conjugate, defined by $q^{*}=\frac{qN}{N-q}>q$, where $N$ is the dimension of the domain $\Om\subset \R$. 
\item[-] We will use the somewhat standard notation for Stampacchia's truncation functions (see \cite{boc13}):
\[
T_k(s)=\max\{-k,\min\{s,k\}\}, \ \ \ G_k(s)=s-T_k(s), \quad \mbox{ for } k>0.
\]
\item[-] The letter $C$ will always denote a positive constant which may vary from line to line. 
\item[-] The Lebesgue measure of a set $A\subseteq \R^{N}$ is denoted by $|A|$.
\item[-] The symbol $\rightharpoonup$ denotes weak convergence. 
\item[-] The letter $\So$ denotes the best constant in Sobolev's inequality $\norma{u}{p^{*}}\le \So\norma{Du}{p}$, see \cite{talenti} for its value.
\end{itemize}
\section{The case $g(x)=0$ and $p\ge 2$.}\label{cs1}
In this section we analyze the Dirichlet problem:
\begin{equation} \label{gzero}
\begin{cases}
 -\lap u= \frac{\la}{|x|^{p}} |u|^{p-2}u + f \qquad & \mbox{in } \Omega,\\
u (x) = 0 & \mbox{on }  \partial \Omega,
\end{cases}
\end{equation}
when $p\ge 2$, $0<\la< \Ha^{p}$ and $f\in\L m$ for a suitable $m\ge1$.

We  begin by considering the ``truncated system'' system:
\begin{equation} \label{gzerot}
 -\Div(|D\un|^{p-2}D\un)  = \frac{\la}{|x|^{p}+\frac 1 n} |\un|^{p-2}\un + f_{n}
\end{equation}
where $f_{n}=T_{n}(f)$. The classical theory of Leray-Lions operators  guarantee the existence of a unique solution $\un\in\wpp\cap\L \infty$.
\subsection{High summability: $f\in\L{m}$ with $m\geq (p^{*})'$.}
\vspace{0.1in}
\begin{theorem}
Suppose $f\in\L m$ with $(p^{*})'\le m<\frac{N}{p}$ and :
\begin{equation}\label{lav}
0<\la < \frac{(m-1)N(N-mp)^{p-1}}{(p-1)^{p-1}m^{p}}
\end{equation}
Then the Dirichlet problem \eqref{gzero} has a solution $u\in\wpp\cap \L s$, where $s=\frac{mN}{N-pm}$.
\end{theorem}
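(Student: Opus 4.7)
\emph{Proof plan.} I would follow the standard two-step scheme: derive uniform a priori estimates on the approximating sequence $\un$ of bounded solutions to \eqref{gzerot}, then pass to the limit. To get the estimates, I would test \eqref{gzerot} against $|\un|^{\g-1}\un$ for a parameter $\g\ge 1$ that will be calibrated below. The left-hand side becomes $\g\iom |\un|^{\g-1}|D\un|^{p}\,\dx$, and the right-hand side splits as
\[
\la\iom\frac{|\un|^{p+\g-1}}{|x|^{p}+\tfrac{1}{n}}\,\dx + \iom f_{n}|\un|^{\g}\,\dx.
\]
Setting $v=|\un|^{(p+\g-1)/p}\in\wpp$, one has $|Dv|^{p}=\bigl(\tfrac{p+\g-1}{p}\bigr)^{p}|\un|^{\g-1}|D\un|^{p}$, so Hardy's inequality \eqref{hardy} applied to $v$ turns the first term above into a multiple of the left-hand side. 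Consequently, provided the residual coefficient
\[
\g-\frac{\la}{\Ha^{p}}\Bigl(\frac{p+\g-1}{p}\Bigr)^{p}
\]
is strictly positive, it can be absorbed, leaving $\iom |\un|^{\g-1}|D\un|^{p}\,\dx \le C\iom |f_{n}||\un|^{\g}\,\dx$. H\"older with exponent $m$ controls the source by $\|f\|_{m}\|\un\|_{m'\g}^{\g}$, while Sobolev applied to $v$ controls $\|\un\|_{p^{*}(p+\g-1)/p}^{p+\g-1}$ by $\iom|\un|^{\g-1}|D\un|^{p}\,\dx$.

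The choice of $\g$ is then forced by matching exponents: I would pick $\g$ so that $m'\g=p^{*}(p+\g-1)/p$, making the two norms of $\un$ appearing on the right the same $\|\un\|_{s}$. A direct algebraic check shows this is the same $\g$ for which positivity of the residual coefficient becomes exactly the hypothesis \eqref{lav}; combining everything then yields a uniform bound $\|\un\|_{s}^{p-1}\le C\|f\|_{m}$, and feeding this back into the Hardy-absorbed inequality gives $\|D\un\|_{p}\le C$ uniformly in $n$.

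Finally, up to a subsequence $\un\wc u$ in $\wpp$, $\un\to u$ a.e.\ and in $\L q$ for $q<p^{*}$. The main obstacle is passing to the limit in the nonlinear operator $\lap \un$, which requires pointwise convergence of $D\un$. I would establish this by the classical Boccardo--Murat technique: testing with $T_{k}(\un-u)$ to show $D\un\to Du$ a.e. From there, the Hardy-type term passes to the limit by Vitali's theorem (the uniform $L^{s}$ control on $\un$ gives uniform integrability of $\la|\un|^{p-1}/(|x|^{p}+\tfrac{1}{n})$), and $f_{n}\to f$ in $\L m$ takes care of the source, yielding \eqref{gzero} in the limit. The delicate structural feature is that hypothesis \eqref{lav} is precisely the positivity condition forced by the Hardy-calibrated choice of $\g$, so without it the absorption step would simply fail.
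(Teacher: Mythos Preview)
Your proposal is correct and follows essentially the same strategy as the paper: the paper tests \eqref{gzerot} with $\frac{1}{\g-p+1}|\un|^{\g-p}\un$, which is exactly your test function $|\un|^{\g'-1}\un$ under the relabelling $\g'=\g-p+1$, and the Hardy absorption, the Sobolev step, and the matching $m'\g'=\tfrac{p^{*}}{p}(p+\g'-1)$ coincide with the paper's choice $\tfrac{\g p^{*}}{p}=(\g-p+1)m'$. If anything, you are more careful than the paper about the passage to the limit (invoking the Boccardo--Murat argument for a.e.\ gradient convergence and Vitali for the Hardy term), which the paper dispatches in a single line.
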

\begin{proof}
 Take $\vf=\frac{1}{\g-p+1}|\un|^{\g-p}\un$ as a test function in \eqref{gzerot}, where $\g\ge p$ is a number to be chosen. If $\g=p$, we can easily see that $\norma{D\un}{p}\le C$, so $\un$ is bounded in $\wpp$ and hence $\un\wc u$ up to a subsequence. On the other hand, if $\g>p$ we have:
\[
 \iom |D \un|^{p}|\un|^{\g-p}\le\frac{\la}{\g-p+1}\iom \frac{|\un|^{\g}}{|x|^{p}}+\frac{1}{\g-p+1}\iom |f(x)||u_{n}|^{\g-p+1}.
\]
Rearranging using Hardy's inequality \eqref{hardy} with $v=|\un|^{\frac \g p}$
\[
 \frac {p^{p}} {\g^{p}}\iom |D |\un|^{\frac \g p} |^{p}\le \frac{\la}{\Ha^{p}(\g-p+1)}\iom  |D |\un|^{\frac \g p} |^{p}+\frac{1}{\g-p+1}\iom |f(x)||u_{n}|^{\g-p+1}.
\]
That is,
\[
 \left(\frac {p^{p}} {\g^{p}} -  \frac{\la}{\Ha^{p}(\g-p+1)} \right)\iom |D |\un|^{\frac \g p} |^{p}\le\frac{1}{\g-p+1}\norma{f}{m}\left(\iom |\un|^{(\g-p+1)m'} \right)^{\frac{1}{m'}}
\]
where we have used Holder's inequality in the last integral. 

Choosing $\g$ such that $\frac{\g p^{*}}{p}=(\g-p+1)m'=(p-1)s$, that is
\[
\g=\frac{(p-1)m(N-p)}{N-pm},
\]
we obtain:
\[
 \left(\frac {p^{p}} {\g^{p}} -  \frac{\la}{\Ha^{p}(\g-p+1)} \right)\left( \iom |\un|^{\frac{\g p^{*}}{p}} \right)^{\frac{p}{p^{*}}-\frac{1}{m'}=\frac{1}{s}} \le\frac{1}{\g-p+1}\norma{f}{m}
\]
In order for this to be meaningful, we must require $\g\ge p$ and $\frac {p^{p}} {\g^{p}} > \frac{\la}{\Ha^{p}(\g-p+1)}$, that is:
\[
(p^{*})'\le m<\frac{N}{p} \text{ and }0<\la < \frac{p^{p} \Ha^{p}(\g-p+1)} {\g^{p}}=\frac{(m-1)N(N-mp)^{p-1}}{(p-1)^{p-1}m^{p}}
\]
We conclude that
\end{proof}
\[
 \norma{\un}{s}\le C\norma{f}{m}
\]
for some $C>0$ that doesn't depend on $n$. Therefore, $\un\to u$ with $u\in\L{s}$. We can easily see that $u$ is a weak solution by passing the limit in \eqref{gzerot}.
\vspace{0.1in}
\begin{remark}
If $p=2$, we recover Theorem 2.1 in \cite{boc06}.
\end{remark}
\vspace{0.1in}
\begin{remark}
Notice the contrast between this case and the case $\lap u = f$ (treated for example in \cite{boc13}). In the latter, we only need $(p^{*})'\le m<\frac{N}{p}$, whereas in the former an additional restriction has to be made in order to have the same regularity.
\end{remark}
\vspace{0.1in}
\begin{remark}
The inequality \eqref{lav} is actually optimal as the example in \cite[Ex.~2.2]{boc06} shows. The author construct a radial example in the case $p=2$,$\la=\frac{(m-1)N(N-mp)^{p-1}}{(p-1)^{p-1}m^{p}}$ and shows that the solution $u\notin \L{s}$.
\end{remark}
\subsection{Low summability: $f\in\L{m}$ with $1<m< (p^{*})'$}
In this scenario, the right hand side in \eqref{gzero} is not in $\mathbf{W}^{-1,p'}(\Omega)$, hence we can't apply classical existence results despite the coercivity. 
\vspace{0.1in}
\begin{theorem}
Suppose $f\in\L m$ with $1<m< (p^{*})'$ and 
\begin{equation}\label{clam}
0<\la < \frac{(m-1)N(N-mp)^{p-1}}{(p-1)^{p-1}m^{p}}.
\end{equation}
Then the Dirichlet problem \eqref{gzero} has a distributional solution $u\in \mathbf{W}_{0}^{1,q}(\Om)$, where $q=\frac{pmN}{-Np+m(N(p-2)+p)}$.
\end{theorem}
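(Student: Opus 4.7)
My approach is to extend the truncation-and-test-function method of the previous theorem to this low-summability regime, where $f\notin\mathbf{W}^{-1,p'}(\Om)$ so that classical $\wpp$ energy estimates are no longer directly available. Working with the same approximating problem \eqref{gzerot}, I would first extract both an a priori $L^{s}$ bound on $\un$ and an auxiliary weighted-energy bound, and then convert the latter into an unweighted $W_{0}^{1,q}$ bound by H\"older's inequality.

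\emph{Step 1 (a priori bounds).} I would plug into \eqref{gzerot} exactly the same test function $\varphi=\frac{1}{\gamma-p+1}|\un|^{\gamma-p}\un$ and the same choice $\gamma=\frac{(p-1)m(N-p)}{N-pm}$ used in the previous theorem. The only change is that now $\gamma\in(p-1,p)$ because $m<(p^{*})'$; nevertheless, $\varphi\in\wpp\cap\L\infty$ is a legitimate test function since $\un\in\L\infty$ (by Leray-Lions theory), with a standard regularization to handle the set $\{\un=0\}$. The same chain of inequalities --- Hardy's inequality \eqref{hardy} with $v=|\un|^{\gamma/p}$ to absorb the singular term into the left-hand side (the coefficient that remains is strictly positive precisely by \eqref{clam}), Sobolev's embedding on the left, and H\"older in $\L m$ on the right --- then produces both
\[
\iom |\un|^{\gamma-p}|D\un|^{p}\le C
\qquad\text{and}\qquad
\norma{\un}{s}\le C,\quad s=\frac{mN}{N-pm}.
\]

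\emph{Step 2 (gradient estimate).} To extract an unweighted $L^{q}$ bound for $|D\un|$, I would write
\[
|D\un|^{q}=\bigl(|D\un|^{p}|\un|^{\gamma-p}\bigr)^{q/p}\,|\un|^{-q(\gamma-p)/p}
\]
and apply H\"older with exponents $p/q$ and $p/(p-q)$. Since $\gamma<p$, the second factor is $|\un|$ raised to the positive power $\tfrac{q(p-\gamma)}{p-q}$; imposing the matching condition $\tfrac{q(p-\gamma)}{p-q}=s$ and inserting the explicit value of $\gamma$ determines the exponent $q$ appearing in the statement, and one checks that $q<p$ exactly when $m<(p^{*})'$. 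The resulting bound $\norma{D\un}{q}\le C$ is uniform in $n$, so $\{\un\}$ is bounded in $\mathbf{W}_{0}^{1,q}(\Om)$.

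\emph{Step 3 (passage to the limit).} Weak compactness and Rellich give a subsequence with $\un\wc u$ in $\mathbf{W}_{0}^{1,q}(\Om)$, $\un\to u$ in $\L t$ for every $t<q^{*}$, and $\un\to u$ a.e.\ in $\Om$. Fatou then ensures $u\in\L s$ and in particular $\lambda|u|^{p-2}u/|x|^{p}\in L^{1}_{\mathrm{loc}}(\Om)$. To pass to the limit in the quasilinear term I would establish a.e.\ convergence of $D\un$ to $Du$ by the now-classical Boccardo--Murat argument (test the difference of the equations for $\un$ and $u_{m}$ with $T_{k}(\un-u_{m})$, exploit the strict monotonicity of $\xi\mapsto|\xi|^{p-2}\xi$, and deduce convergence in measure on every $\{|u|\le k\}$). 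Vitali then yields $|D\un|^{p-2}D\un\to|Du|^{p-2}Du$ in $L^{r}_{\mathrm{loc}}$ for some $r>1$, while a parallel equi-integrability argument using the $L^{s}$ bound and Hardy handles the Hardy-potential term, and one may pass to the limit in the distributional formulation of \eqref{gzerot}.

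The main technical obstacle I expect is the a.e.\ convergence of the gradients in Step 3: the Hardy singularity $|x|^{-p}$ destroys a direct $L^{p'}$-duality approach, so the monotonicity argument must be coupled with the strict bound \eqref{clam} on $\lambda$, which is precisely what allows the Hardy contribution to be re-absorbed at the level of the difference equation as well.
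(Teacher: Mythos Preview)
Your plan is correct and follows essentially the same route as the paper: same choice of $\gamma$, Hardy to absorb the singular term, Sobolev and H\"older to get the $L^{s}$ bound, then H\"older again to pass from the weighted energy estimate to a uniform $W_{0}^{1,q}$ bound, and finally a Boccardo--Murat type limit. The only noteworthy difference is that the paper makes the ``standard regularization'' you allude to explicit by testing with $\varphi=[(t+|\un|)^{\gamma-p+1}-t^{\gamma-p+1}]\,\mathrm{sgn}(\un)$ and then (i) letting $t\to 0$ to obtain the $L^{s}$ bound while (ii) keeping a fixed $t>0$ in the weighted gradient estimate, so that the H\"older step in your Step~2 is carried out against $(t+|\un|)^{p-\gamma}$ rather than $|\un|^{p-\gamma}$, cleanly sidestepping the $0\cdot\infty$ issue on $\{\un=0\}$.
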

\begin{proof}
 Take $\vf=[(t+|\un|)^{\g-p+1} - t^{\g-p+1}]\mathrm{sgn}(\un)$ as a test function in \eqref{gzerot}, where $\g\ge 1$ is a number to be chosen. We have:
\begin{equation}\label{sb}
 (\g-p+1) \iom |D \un|^{p}(t+|\un|)^{\g-p}\le\la \iom \frac{|\un|^{p-2}\un\vf}{|x|^{p}}+\iom |f(x)||(t+|\un|)^{\g-p+1} - t^{\g-p+1}|
\end{equation}
Rearranging this we obtain:
\[
\begin{split}
 \frac{(\g-p+1)p^{p}}{\g^{p}} \iom |D[(t+\un)^{\frac{\g}{p}}-t^{\frac{\g}{p}}]|^{p}&\le \la \iom \frac{|(t+\un)^{\frac{\g}{p}}-t^{\frac{\g}{p}}|^{p}}{|x|^{p}}+
\la\iom \frac{1}{|x|^{p}}\left( |\un|^{p-2}\un\vf -|(t+\un)^{\frac{\g}{p}}-t^{\frac{\g}{p}}|^{p}\right)\\&+\iom |f(x)||(t+|\un|)^{\g-p+1} - t^{\g-p+1}|\\
 \end{split}
\]
Using Hardy and Sobolev inequalities we get:
\[
\begin{split}
 \So^{p}\left(\frac{(\g-p+1)p^{p}}{\g^{p}} - \frac{\la}{\Ha^{p}}\right)\left(\iom |(t+\un)^{\frac{\g}{p}}|^{p^{*}}\right)^{\frac{p}{p^{*}}}&\le 
\la\iom \frac{1}{|x|^{p}}\left( |\un|^{p-2}\un\vf -|(t+\un)^{\frac{\g}{p}}-t^{\frac{\g}{p}}|^{p}\right)\\&+\iom |f(x)||(t+|\un|)^{\g-p+1} - t^{\g-p+1}|\\
 \end{split}
\]
If we fix $n$ and let $t\to0$ we obtain:
\[
\So^{p}\left(\frac{(\g-p+1)p^{p}}{\g^{p}} - \frac{\la}{\Ha^{p}}\right)\left(\iom |\un|^{\frac{\g p^{*}}{p}}\right)^{\frac{p}{p^{*}}}\le \norma{f}{m}\left(\iom |\un|^{(\g-p+1)m'} \right)^{\frac{1}{m'}}
\]
Now we choose $\g$ such that $\frac{\g p^{*}}{p}=(\g-p+1)m'$, that is we choose $$\g=(p-1)\frac{m(N-p)}{N-mp}.$$
We conclude using \eqref{clam} that
\[
\norma{\un}{s}\le C \norma{f}{m}
\]
where $s=\frac{mN}{N-pm}$.

Notice that by \eqref{sb} we also have for a fixed $t>0$:
\[
 \iom \frac{|D \un|^{p}}{(t+|\un|)^{p-\g}}\le C,
\]
hence, fix $\al>1$ we have:
\[
 \iom |D \un|^{\al} \le \iom \frac{|D \un|^{\al}}{(t+|\un|)^{\frac{(p-\g)\alpha}{p}}} (t+|\un|)^{\frac{(p-\g)\alpha}{p}},
\]
Using Holder Inequality with $\frac p \al$ and $\frac{p}{p-\al}$:
\[
 \iom |D \un|^{\al} \le C \left( \iom (t+|\un|)^{\frac{(p-\g)\alpha}{p-\al}}\right)^{\frac{p-\al}{p}},
\]
Let's choose $\al$ such that $\frac{(p-\g)\alpha}{p-\al}=s$ , that is ,
\[
\al=\frac{pmN}{-Np+m(N(p-2)+p)},
\]
which is positive and greater than 1, since we took $m<(p^{*})'$. 

We conclude that
\[
\norma{D\un}{\al}\le C,
\]
so up to a subsequence, $\un\wc u$ in $\mathbf{W}_{0}^{1,\al}(\Om)$.

We can easily see that this $u$ is a distributional solution, see for instance \cite{boc922}. (Notice that $\frac{|\un|^{p-1}\un}{|x|^{p}}\to \frac{|u|^{p-1}u}{|x|^{p}}$ in $\L1$).
\end{proof}
\vspace{0.1in}
\begin{remark}
If $p=2$, we recover Theorem 3.1 in \cite{boc06}.
\end{remark}
\section{The case $g(x)\ge 0$: Regularity gain}\label{cs3}
In this section we analyze the following problems:
\begin{equation} \label{gnot}
\begin{cases}
 -\lap u+g(x)u= \frac{\la}{|x|^{p}} |u|^{p-2}u + f \qquad & \mbox{in } \Omega,\\
u (x) = 0 & \mbox{on }  \partial \Omega,
\end{cases}
\end{equation}
when $0<\la< \Ha^{p}$ and $f,g \in\L 1$. 

Recall that the reason for choosing $0<\la< \Ha^{p}$ is to ensure the coercivity of the operator $T(u)=-\lap u-\frac{\la}{|x|^{p}} |u|^{p-2}u$.
In this case, the lower order term allow us to obtain not only a distributional solution but a weak solution in $\wpp$, where by a weak solution we mean $u\in\wpp$ such that $g(x)u\in\L1$ and for every $\vf\in\wpp\cap\L\infty$ we have:
\[
 \iom |Du|^{p-2}Du\cdot D\vf +\iom g(x)u\,\vf= \iom \frac{\la}{|x|^{p}} |u|^{p-2}u\,\vf + \iom f\,\vf
\]
\subsection{(possibly) unbounded solutions}
\vspace{0.1in}
\begin{theorem}\label{t4}
Suppose $p\ge 2$ and there is a $M>0$ such that
\begin{equation}
|f(x)|\le M g(x).
\end{equation}
Then the Dirichlet problem \eqref{gnot} has a weak solution $u\in\wpp$. Moreover, $u\in\L s$ for any $s\in[p^{*},s_{\la})$, where $s_{\la}$ is the unique solution to the following equation:
\[
\la=\frac{\Ha^{p}[p(\frac{s_{\la}}{p^{*}}-1)+1]}{(\frac{s_{\la}}{p^{*}})^{p}}.
\]
\end{theorem}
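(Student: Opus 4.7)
The plan is to repeat the truncation scheme of Section~\ref{cs1}, now carrying the $g_n\un$ term through the estimates and exploiting the hypothesis $|f|\le Mg$ to absorb the source into the lower order term. Concretely, I would consider the approximating problem
\begin{equation*}
-\lap \un + g_n \un = \frac{\la}{|x|^p + \tfrac{1}{n}}|\un|^{p-2}\un + f_n,
\end{equation*}
with $g_n=T_n(g)$ and $f_n=T_{Mn}(f)$, so that $|f_n|\le M g_n$ pointwise. Classical Leray--Lions theory then provides $\un\in\wpp\cap\L\infty$.

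For the $\wpp$-bound I would test with $\un$; Hardy's inequality~\eqref{hardy} controls the singular term, and Young's inequality combined with $|f_n|\le Mg_n$ gives
\begin{equation*}
\Bigl(1-\tfrac{\la}{\Ha^p}\Bigr)\iom |D\un|^p + \iom g_n\un^2 \le \iom f_n\un \le M\iom g_n|\un| \le \tfrac12\iom g_n\un^2 + \tfrac{M^2}{2}\norma{g}{1}.
\end{equation*}
Absorbing the last $g_n\un^2$ term yields uniform bounds on $\un$ in $\wpp$ and on $g_n\un^2$ in $\L 1$.

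For the higher integrability, I fix $s\in[p^*,s_\la)$, set $\g=sp/p^*\ge p$, and test with $|\un|^{\g-p}\un$. Applying Hardy to $v=|\un|^{\g/p}$ to dominate the singular term, and Young with exponents $\g-p+2$ and $(\g-p+2)/(\g-p+1)$ to split the $Mg_n|\un|^{\g-p+1}$ coming from $|f_n|\le Mg_n$, one obtains
\begin{equation*}
\Bigl[\tfrac{(\g-p+1)p^p}{\g^p}-\tfrac{\la}{\Ha^p}\Bigr]\iom \bigl|D|\un|^{\g/p}\bigr|^p + \tfrac{1}{\g-p+2}\iom g_n|\un|^{\g-p+2} \le \tfrac{M^{\g-p+2}}{\g-p+2}\norma{g}{1}.
\end{equation*}
The bracket is positive precisely when $\la<\Ha^p[p(s/p^*-1)+1]/(s/p^*)^p$, i.e.\ exactly when $s<s_\la$; Sobolev's embedding then gives $\un$ bounded in $\L s$ uniformly in $n$.

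Finally I pass to the limit. By compactness, $\un\wc u$ in $\wpp$, $\un\to u$ a.e.\ and in every $\L q$ with $q<p^*$. The main technical obstacle, as usual for quasilinear equations, will be the strong a.e.\ convergence of $D\un$, required to pass to the limit in $-\lap \un$; I would establish it by the Boccardo--Murat argument, testing the difference equation with $T_k(\un-T_h(u))$, exploiting the monotonicity of the $p$-Laplacian, and applying Vitali's theorem. The $\L s$ bound with some $s>p^*$ then gives equi-integrability of the singular nonlinearity, while $|f_n|\le Mg\in\L 1$ together with the uniform $\L 1$ bound on $g_n\un^2$ controls the $f_n$ and $g_n\un$ terms via Vitali, producing the claimed weak solution $u\in\wpp$ with $u\in\L s$ for all $s\in[p^*,s_\la)$.
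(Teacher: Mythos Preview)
Your argument is correct and is essentially the paper's proof: the same truncation--test with $\un$--test with a power of $\un$ scheme, with Hardy controlling the singular term and $|f_n|\le Mg_n$ allowing the source to be absorbed into the lower order term. The only cosmetic differences are that the paper additionally truncates the factor $|\un|^{p-2}\un$ (so that $\un\in L^\infty$ is immediate and the power test functions are automatically admissible) and, in place of your Young-inequality absorption, uses the elementary bound on $t\mapsto t^{k}(M-t)$ over $[0,M]$.
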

\begin{proof}
We consider the truncated equation
\begin{equation}\label{crop}
 -\lap \un+\frac{g(x)}{1+\frac{M}{n}g(x)}\un= \frac{\la}{|x|^{p}+\frac 1 n} \frac{|\un|^{p-2}\un}{1+\frac{|\un|^{p-1}}{n}} + \frac{f}{1+\frac{|f|}{n}}
\end{equation}
Classical Leray-Lions theory guarantees the existence of a weak solution $\un\in\wpp\cap\L\infty$, see for example Theorem 5.1 or Example 9.12 in \cite{boc13}.

Set $g_{n}=\frac{g(x)}{1+\frac{M}{n}g(x)}$ and $f_{n}=\frac{f}{1+\frac{|f|}{n}}$, by hypothesis we have:
\begin{equation}\label{fg}
|f_{n}(x)|\le M g_{n}(x).
\end{equation}
Now, take $\vf=\un$ as a test function in \eqref{crop}, we have:
\[
 \iom |Du|^{p}+\iom g_{n}\un^{2}\le \la\iom \frac{1}{|x|^{p}+\frac 1 n} \frac{|\un|^{p}}{1+\frac{|\un|^{p-1}}{n}} + \iom |f_{n}\un|
\]
Using \eqref{fg}, we have:
\[
 \iom |Du|^{p}+\iom g_{n}\un^{2}\le  \la\iom\frac{|\un|^{p}}{|x|^{p}} + M\iom g_{n}|\un|
\]
By Hardy's inequality:
\[
 \iom |Du|^{p}+\iom g_{n}\un^{2}\le  \frac{\la}{\Ha^{p}}\iom |D\un|^{p}+ M\iom g_{n}|\un|
\]
Rearranging, we obtain:
\[
\left(1-\frac{\la}{\Ha^{p}}\right) \iom |Du|^{p}\le  \iom g_{n}|\un|(M-\un).
\]
Using the trivial fact that the parabola $x(M-x)$ has a maximum value of $M^{2}$ if $0\le x\le M$, we conclude that:
\[
\left(1-\frac{\la}{\Ha^{p}}\right) \iom |Du|^{p}\le  M^{2}\iom g_{n}\le M^{2}\iom g.
\]
Therefore, $\un$ is bounded in $\wpp$ and hence $\un\wc u$ for some $u\in \wpp$. As before, we can easily pass the limit and conclude that $u$ is a solution. The term $g_{n}(x)\un$ is the tricky one, however Hardy's inequality and Vitali's convergence theorem guarantees the $\L1$ convergence.

We now prove the regularity of $u\in\wpp$. It's enough to prove that $\un$ is bounded in $\L s$. Take $\vf=|\un|^{p(\g-1)}\un$ as a test function in \eqref{crop}, for some $\g\ge 1$ to be chosen later. We have:
\[
 (p(\g-1)+1)\iom |Du|^{p}|\un|^{p(\g-1)}+\iom g_{n}|\un|^{p(\g-1)}\un^{2}\le \la\iom \frac{|\un|^{p\g}}{|x|^{p}} + \iom |f_{n}||\un|^{p(\g-1)+1}.
\]
Reasoning as before, we apply Hardy's inequality and use \eqref{fg} to obtain:
\[
 \left(\frac{p(\g-1)+1}{\g^{p}}-\frac{\la}{\Ha^{p}}\right)\iom |D|\un|^{\g}|^{p}+\iom g_{n}|\un|^{p(\g-1)+2}\le  M\iom g_{n}|\un|^{p(\g-1)+1}.
\]
Simplifying we get:
\[
 \left(\frac{p(\g-1)+1}{\g^{p}}-\frac{\la}{\Ha^{p}}\right)\iom |D|\un|^{\g}|^{p}\le \iom g_{n}|\un|^{p(\g-1)+1}(M-|\un|) .
\]
As before, we can clearly see that the function $x^{p(\g-1)+1}(M-x)$ has a maximum for $0\le x\le M$. We have:
\[
 \left(\frac{p(\g-1)+1}{\g^{p}}-\frac{\la}{\Ha^{p}}\right)\iom |D|\un|^{\g}|^{p}\le M^{p(\g-1)+2}\iom g .
\]
By Sobolev's inequality:
\[
 \left(\frac{p(\g-1)+1}{\g^{p}}-\frac{\la}{\Ha^{p}}\right)\frac{1}{\So}\iom |\un|^{\g p^{*}}\le M^{p(\g-1)+2}\iom g .
\]
In order for this to make sense, we must have:
\[
\la<\frac{\Ha^{p}[p(\g-1)+1]}{\g^{p}}
\]
Let $s=\g p^{*}$, and consider the function $h(s)=\frac{\Ha^{p}[p(\frac{s}{p^{*}}-1)+1]}{(\frac{s}{p^{*}})^{p}}$. Notice that if $s=p^{*}$ then $h(s)=\Ha^{p}$ and $\la<\Ha^{p}$ which is always true. Also, $h(s)$ is decreasing in $[p^{*},\infty)$, so eventually it will hit $\la$, at a point $s=s_{\la}$. In conclusion, the estimate is valid for $p^{*}\le s < s_{\la}$.
\end{proof}
\subsection{Bounded solutions: $1<p\le 2$}
\vspace{0.1in}
\begin{theorem}\label{ls}
Suppose there are $M>0, N>1$ such that
\begin{equation}\label{fgk}
|f(x)|\le M g(x) \text{ and } N\frac{\la}{|x|^{p}}\le g(x)
\end{equation}
If we assume $1<p\le 2$, then the Dirichlet problem \eqref{gnot} has a bounded weak solution $u\in\wpp\cap\L\infty$. 
\end{theorem}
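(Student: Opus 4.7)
My plan has two parts. First, I would establish existence of a weak solution $u\in\wpp$ by reusing the truncation procedure of Theorem \ref{t4}: testing the approximating equation \eqref{crop} with $\un$ and invoking $|f_n|\le M g_n$ together with Hardy's inequality produces a $\wpp$-bound uniform in $n$, and none of this step in fact requires $p\ge 2$. Passing to the limit in \eqref{crop} using Vitali's theorem and Hardy's inequality, exactly as in Theorem \ref{t4}, yields a weak solution $u$ with $g u\in\L 1$. Second, I would upgrade $u$ to $\L\infty$ by a Stampacchia-type argument on the limiting equation, using $G_k(u)$ as a test function; this is the step where the second hypothesis in \eqref{fgk} and the restriction $p\le 2$ enter decisively.

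For the boundedness, since $G_k(u)\in\wpp\cap\L\infty$ it is an admissible test function in \eqref{gnot}. Using the pointwise inequality $u\,G_k(u)\ge k|G_k(u)|$ on $\{|u|>k\}$ together with $|f|\le M g$, the resulting identity collapses to
\[
\iom |DG_k(u)|^{p} + k\iom g\,|G_k(u)| \le \la\iom \frac{|u|^{p-1}|G_k(u)|}{|x|^{p}} + M\iom g\,|G_k(u)|.
\]
The crux is the subadditivity $(k+a)^{p-1}\le k^{p-1}+a^{p-1}$ on $[0,\infty)$, valid precisely when $1<p\le 2$ by concavity of $t\mapsto t^{p-1}$. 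Applied with $a=|G_k(u)|$ on the support of $G_k(u)$, this splits the Hardy integrand in two; one piece is controlled by the hypothesis $N\la/|x|^{p}\le g$ and contributes $(k^{p-1}/N)\iom g\,|G_k(u)|$, while the other is absorbed by Hardy's inequality into $(\la/\Ha^{p})\iom |DG_k(u)|^{p}$. Rearranging,
\[
\left(1-\tfrac{\la}{\Ha^{p}}\right)\iom |DG_k(u)|^{p}+\left(k-\tfrac{k^{p-1}}{N}-M\right)\iom g\,|G_k(u)|\le 0.
\]
Because $p-1\le 1$ and $N>1$, both coefficients are strictly positive once $k$ is chosen large enough, forcing $G_k(u)\equiv 0$ and hence $\|u\|_{\L\infty}\le k$.

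The main obstacle is the subadditivity step: for $p>2$ the map $t\mapsto t^{p-1}$ is superadditive rather than subadditive, so the Hardy integrand would split into a piece of order $|G_k(u)|^{p-1}$ beyond the linear one, which cannot be absorbed into the linear $g\,|G_k(u)|$ term on the left; one would then obtain only an integral estimate rather than a pointwise bound on $k$. A secondary but non-trivial point is the strong convergence of the lower order term $g_n\un\to gu$ in $\L 1$; this follows from Vitali's theorem using the uniform bound $\iom g_n \un^{2}\le C$ delivered by the existence step, exactly as in Theorem \ref{t4}.
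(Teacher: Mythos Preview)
Your overall strategy differs from the paper's in two ways. First, the paper establishes the $\L\infty$ bound \emph{on the approximations} $u_n$ (which are bounded by construction) and only afterwards derives the $\wpp$ estimate and passes to the limit; you reverse this order. Second, in the $G_k$ step the paper drops the gradient integral entirely and compares $|u_n|$ with $|u_n|^{p-1}/N$ directly, using that $t-t^{p-1}/N\to\infty$ for $1<p\le 2$; Hardy's inequality is not invoked in the boundedness step at all. Your subadditivity--Hardy splitting is a genuinely different mechanism, arguably cleaner because it yields the single scalar condition $k-k^{p-1}/N-M>0$ rather than implicitly requiring monotonicity of $t-t^{p-1}/N$ on $[k,\infty)$.

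There is, however, a real gap: the claim that $G_k(u)\in\wpp\cap\L\infty$ is circular, since $G_k(u)\in\L\infty$ is equivalent to $u\in\L\infty$, which is precisely what you are trying to prove. At this stage you only know $u\in\wpp$, so $G_k(u)$ is not an admissible test function under the paper's definition of weak solution. The repair is either to run your $G_k$ argument on the bounded approximations $u_n$ --- this is exactly why the paper works at the approximate level --- or to test with the double truncation $T_j(G_k(u))\in\wpp\cap\L\infty$ and send $j\to\infty$, using the dominations $g u^2\in\L1$ (Fatou on the bound $\int g_n u_n^{2}\le C$ from the existence step) and $|u|^{p}/|x|^{p}\in\L1$ (Hardy) to pass to the limit in each term. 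Either route closes the gap, but as written the argument is incomplete.
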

\begin{proof}
We consider the truncated equation again (slightly modified):
\begin{equation}\label{crop2}
 -\lap \un+g_{n}(x)\un= \frac{\la}{|x|^{p}+\frac{MN\la}{n}} \frac{|\un|^{p-2}\un}{1+\frac{|\un|^{p-1}}{n}} + f_{n}
\end{equation}
Notice that 
\begin{equation} \label{relkl}
N\frac{\la}{|x|^{p}+\frac{MN\la}{n}} \le g_{n}(x).
\end{equation}
Now, fix $k>0$ and choose $\vf=G_{k}(\un)$ as a test function in \eqref{crop2}(See the Notation section for the definition of $G_{k}(s)$). We have:
\[
 \iom |DG_{k}(\un)|^{p}+\iom g_{n}(x)\un G_{k}(\un) \le \iom\frac{\la}{|x|^{p}+\frac{MN\la}{n}} |\un|^{p-1}|G_{k}(\un)| + \iom |f_{n}G_{k}(\un)|.
\]
Using \eqref{relkl},\eqref{fgk} and the fact that $sG_{k}(s)\ge 0$ we obtain:
\[
 \iom |DG_{k}(\un)|^{p}+\iom g_{n}(x)|\un G_{k}(\un)| \le \iom\frac{g_{n}(x)}{N} |\un|^{p-1}|G_{k}(\un)| + M\iom g_{n}(x)|G_{k}(\un)|.
\]
Simplifying:
\[
\iom g_{n}(x)|G_{k}(\un)|\left( |\un|(1-\frac{|\un|^{p-2}}{N})-M\right) \le 0
\]
Choose $k\gg0$ such that $ k(1-\frac{1}{k^{2-p}N})>M$, for that $k$ we must have
\[
|G_{k}(\un)|=0
\]
Which is to say that $\norma{\un}{\infty}\le k$. On the other hand, choosing $\vf=\un$ as a test function in \eqref{crop2}:
\[
 \iom |D\un|^{p}\le \iom\frac{\la}{|x|^{p}+\frac{MN\la}{n}} |\un|^{p}+ \iom |f_{n}\un|.
\]
Using Hardy's and Holder's inequalities:
\[
\left(1 - \frac{\la}{\Ha^{p}}\right) \iom |D\un|^{p}\le \norma{f}{1}\norma{\un}{\infty}.
\]
We conclude that  $\un$ is bounded in $\wpp\cap\L\infty$, hence up to a subsequence $\un\wc u$ in $\wpp$. Arguing as in the proof of theorem \ref{t4}, we can pass the limit in \eqref{crop2}, hence $u\in\wpp\cap\L\infty$ is a weak solution.
\end{proof}
\vspace{0.1in}
\begin{remark}
Notice that if $p>2$, the argument in the proof the theorem \ref{ls} fails and we believe it's possible that unbounded solutions may exist even with the strong restriction \eqref{fgk}.
\end{remark}
\vspace{0.1in}
\begin{remark}
We could increase the difficulty of the problem treated in this section if we had added a term of the form $g(x)u^{\theta-1}u$, the problem would still be solvable but considerably more difficult since the $\theta$ would interact with the $p$ in the estimates leading to existence estimates depending on different values of $\theta$ and $p$. We plan to address this question in future works.
\end{remark}
\vspace{0.1in}
\begin{remark}
The case $p=1$ remains challenging, that is, existence and regularity for the solution of the following problem:
\[
\Div(\frac{Du}{|Du|}) + g(x) u = \frac{\la}{|x|} \frac{u}{|u|} + f,
\]
where $f\in\L1$. We can't use Hardy's inequality, at least, not in its usual form. See \cite{chata24} for a related problem.
\end{remark}
\bibliography{sn-article}
\end{document}